\documentclass[journal,twoside,web]{ieeecolor}
\pdfoutput=1
\usepackage{lcsys}
\usepackage{cite}
\usepackage{amsmath,amssymb,amsfonts}
\usepackage{mathtools}
\usepackage{graphicx}
\usepackage{textcomp}
\usepackage{booktabs}
\usepackage{hyperref}
\def\BibTeX{{\rm B\kern-.05em{\sc i\kern-.025em b}\kern-.08em
    T\kern-.1667em\lower.7ex\hbox{E}\kern-.125emX}}
\newif\ifshowchanges

\showchangesfalse 

\ifshowchanges
  \usepackage[switch,pagewise]{lineno}
\fi

\ifshowchanges
  \newcommand{\rev}[1]{\textcolor{blue}{#1}}
\else
  \newcommand{\rev}[1]{#1}
\fi

\newtheorem{theorem}{Theorem}[section]

\newtheorem{definition}[theorem]{Definition}
\newtheorem{assumption}[theorem]{Assumption}
\newtheorem{remark}[theorem]{Remark}

\newtheorem{proposition}[theorem]{Proposition}

\newcommand{\Rn}{\mathbb{R}^n}
\newcommand{\bx}{\mathbf{x}}
\newcommand{\bg}{\mathbf{g}}
\newcommand{\bw}{\mathbf{w}}
\newcommand{\bH}{\mathbf{H}}

\newcommand{\bu}{\mathbf{u}}
\newcommand{\cX}{\mathcal{X}}
\newcommand{\cS}{\mathcal{S}}

\begin{document}

\ifshowchanges\linenumbers\fi

\title{Saddle Point Evasion via Curvature-Regularized Gradient Dynamics}

\author{Liraz Mudrik, Isaac Kaminer, Sean Kragelund, and Abram H. Clark
\thanks{This work was supported in part by the Office of Naval Research Science of Autonomy Program under Grant No.\ N0001425GI01545 and Consortium for Robotics Unmanned Systems Education and Research at the Naval Postgraduate School.}
\thanks{L. Mudrik, I. Kaminer, and S. Kragelund are with the Department of Mechanical and Aerospace Engineering, Naval Postgraduate School, Monterey, CA, 93943, USA (e-mail: liraz.mudrik.ctr@nps.edu; kaminer@nps.edu; spkragel@nps.edu).}
\thanks{A. H. Clark is with the Department of Physics, Naval Postgraduate School, Monterey, CA, 93943, USA (e-mail: abe.clark@nps.edu).}}

\maketitle
\thispagestyle{empty}

\begin{abstract}
Nonconvex optimization underlies many modern machine learning and control tasks, where saddle points pose the dominant obstacle to reliable convergence in high-dimensional settings. Escaping these saddle points deterministically \rev{using continuous-time optimization} remains an open challenge: gradient descent is blind to curvature, stochastic perturbation methods lack deterministic guarantees, and Newton-type approaches suffer from Hessian singularity. \rev{Adopting the perspective of viewing optimization algorithms as dynamical systems, we present} Curvature-Regularized Gradient Dynamics (CRGD), which augments the objective with a smooth penalty on the \rev{negative Hessian eigenvalues}, yielding an augmented cost that serves as an optimization Lyapunov function with \rev{user-selectable convergence rates} to second-order stationary points. \rev{Numerical experiments confirm that CRGD converges to second-order stationary points, even in regimes where gradient descent fails.}
\end{abstract}

\begin{IEEEkeywords}
Optimization algorithms, Lyapunov methods, Stability of nonlinear systems.
\end{IEEEkeywords}

\section{Introduction}
\label{sec:introduction}

\IEEEPARstart{N}{onconvex} optimization plays a central role in modern machine learning and control applications, where the speed and reliability of convergence directly impact practical performance. A fundamental difficulty is the accumulation of critical points in high-dimensional objective landscapes. Dauphin et al.~\cite{dauphin_identifying_2014} provided theoretical and empirical evidence that, among these, saddle points, rather than poor local minima, are the dominant obstacle to convergence.

Classical gradient flow $\dot{\bx} = -\nabla J(\bx)$ cannot distinguish saddle points from local minima. While~\cite{lee_gradient_2016} established that gradient descent generically converges to local minimizers, \cite{du_gradient_2017} demonstrated that this convergence can take exponential time from worst-case initializations: starting at distance $\varepsilon_0$ from a saddle with eigenvalue gap $\delta$, the escape time scales as $O(\frac{1}{\delta} \ln \frac{1}{\varepsilon_0})$, rendering gradient descent unreliable when the gap is small. \rev{The idea of exploiting stochastic perturbations to escape saddle points was pioneered in~\cite{ge_escaping_2015} in the context of tensor decomposition, and subsequently refined into perturbed gradient descent (PGD)~\cite{jin_nonconvex_2021}, which provides finite-time escape guarantees under noise injection. Later,~\cite{lee_first-order_2019} generalized the generic convergence result of~\cite{lee_gradient_2016} to a broad class of first-order methods, establishing saddle avoidance as a robust phenomenon, albeit without explicit rate bounds. These stochastic approaches offer only probabilistic guarantees and no user-selectable convergence rate.} In the deterministic domain, \rev{\cite{anandkumar_efficient_2016} developed efficient methods for escaping higher-order saddle points where the Hessian is degenerate, while} Nesterov and Polyak~\cite{nesterov_cubic_2006} introduced cubic regularization of Newton's method, which was later refined in~\cite{carmon_accelerated_2018}, which developed accelerated methods achieving the optimal $O(\bar{\varepsilon}^{-7/4})$ gradient-query complexity for finding $\bar{\varepsilon}$-second-order stationary points (SOSP). Later,~\cite{paternain_newton_2019} proposed a Newton-based method aimed at fast saddle evasion, though requiring noise injection for the full guarantee. 
These methods have considerably advanced the state of the art; however, they share the common limitation that none provide selectable convergence rates, and Newton-type methods suffer from Hessian singularity: the step $\bH^{-1}\bg$ is undefined when $\bH$ is rank-deficient.

The optimization Lyapunov function (OLF) framework~\cite{mudrik_optimization_2025-1} addresses the rate selectability gap by recasting optimization as a feedback design problem: the optimizer is modeled as a single integrator whose control input is synthesized from a Lyapunov certificate, enforcing a user-selected convergence profile exactly. However, the natural quadratic norm of the gradient as a Lyapunov candidate is blind to curvature, and its gradient becomes singular when the Hessian is rank-deficient. Both limitations stem from the choice of Lyapunov function, not the framework itself, motivating the search for a curvature-aware alternative. \rev{This continuous-time, Lyapunov-based perspective on optimization belongs to the broader program of viewing algorithms as dynamical systems~\cite{dorfler_toward_2024}, which has yielded fundamental insights for first-order stationarity but has not yet addressed SOSPs.}

Following the OLF paradigm, we augment the objective with a smooth penalty on the \rev{negative Hessian eigenvalues}. The resulting augmented cost serves as a Lyapunov function whose gradient avoids the Hessian singularity: at a saddle point where the gradient vanishes, the curvature penalty generates a nonzero descent direction, and the dynamics flow away from the saddle. By scheduling the gain through a convergence law, the method enforces exact Lyapunov decay with user-selectable convergence profiles.

The contributions of this letter are as follows. First, we construct an augmented cost $\Phi$ whose critical points coincide with the SOSP of the cost function, ensuring that the dynamics go through saddle points rather than getting trapped. Next, we show that $\Phi$ serves as a Lyapunov function admitting user-selected convergence laws, thereby yielding convergence to SOSP. We further prove that any spurious critical points introduced by the curvature penalty are themselves saddle points of $\Phi$ for penalty weights below a landscape-dependent threshold, ensuring they cannot be limit points of the dynamics. Finally, we validate the method on numerical examples, \rev{demonstrating 100\% convergence to SOSPs under all four convergence laws, even when gradient descent fails}. \rev{To the best of the authors' knowledge, CRGD is the first continuous-time deterministic method that provides convergence to second-order stationary points on general nonconvex landscapes with user-selectable convergence laws.}

Table~\ref{tab:comparison} summarizes the positioning of the proposed method relative to existing approaches; notably, CRGD is the only method that combines deterministic guarantees with selectable convergence rates.
The remainder of this letter is organized as follows. Section~\ref{sec:preliminaries} formulates the problem, presents the OLF framework, and states the standing assumptions. Section~\ref{sec:contributions} develops the CRGD method and establishes the convergence guarantees. Numerical validation is presented in Sec.~\ref{sec:results}, and concluding remarks are offered in Sec.~\ref{sec:conclusion}.

\begin{table}[htbp]
    \centering
    \caption{Comparison of saddle evasion methods.}
    \label{tab:comparison}
    \renewcommand{\arraystretch}{1.15}
    \begin{tabular}{@{}lcccc@{}}
        \toprule
        Method & Fully Det. & \rev{Selectable} & Cost & \rev{Info} \\
        \midrule
        Gradient Descent~\cite{lee_gradient_2016} & \checkmark & --- & $O(n)$ & \rev{$\nabla J$} \\
        PGD~\cite{jin_nonconvex_2021} & --- & --- & $O(n)$ & \rev{$\nabla J$} \\
        Cubic Reg.~\cite{nesterov_cubic_2006} & \checkmark & --- & $O(n^3)$ & \rev{$\nabla J$, $\nabla^2 J$} \\
        Newton-based~\cite{paternain_newton_2019} & --- & --- & $O(n^3)$ & \rev{$\nabla J$, $\nabla^2 J$} \\
        CRGD (Proposed) & \checkmark & \checkmark & $O(n^3)$ & \rev{$\nabla J$, $\lambda_i(\nabla^2 J)$} \\
        \bottomrule
    \end{tabular}
    \\[4pt]
    \raggedright\footnotesize Fully Det.: deterministic guarantee, no stochastic perturbation. \rev{Selectable: user-selectable convergence rate}. Cost: per-step complexity\rev{. Info: oracle information per step}.
\end{table}

\section{Problem Statement and Preliminaries}
\label{sec:preliminaries}

We consider the unconstrained optimization problem
\begin{equation} \label{eq:problem}
    \min_{\bx \in \Rn} J(\bx),
\end{equation}
where $J : \Rn \to \mathbb{R}$ is the objective function with gradient $\bg(\bx) = \nabla J(\bx)$ and Hessian $\bH(\bx) = \nabla^2 J(\bx)$.

\begin{definition}[Strict Saddle Point] \label{def:saddle}
A point $\bx^* \in \Rn$ is a strict saddle point if $\bg(\bx^*) = \mathbf{0}$ and $\bH(\bx^*)$ has at least one strictly negative eigenvalue.
\end{definition}

The complementary notion identifies critical points satisfying second-order necessary conditions. The set of SOSP is
\begin{equation} \label{eq:sosp}
    \cX^* := \{ \bx \in \Rn \mid \bg(\bx) = \mathbf{0},\ \bH(\bx) \succeq 0 \}.
\end{equation}
This set includes all local minima but strictly excludes strict saddle points. Our goal is to design a feedback law that drives the state to $\cX^*$ at a user-selectable rate.

\subsection{Optimization Lyapunov Functions}
\label{sec:olf}

The OLF framework \textit{synthesizes} the optimizer from a Lyapunov certificate rather than proposing an algorithm and analyzing it post hoc. The state is governed by a single integrator
\begin{equation} \label{eq:single_integrator}
    \dot{\bx} = \bu,
\end{equation}
where $\bu \in \Rn$ is the input to be designed. 
\rev{In fact, an integrator structure is a necessary condition for convergence~\cite{scherer_convex_2021}.}
The key idea is to choose a candidate Lyapunov function $V(\bx) \geq 0$ whose zero set $\{V = 0\}$ encodes the desired optimality conditions, and then design $\bu$ so that $V$ decreases at a prescribed rate. Specifically, one selects a convergence law $\sigma(V, t) \geq 0$ and requires the Lyapunov function to satisfy
\begin{equation} \label{eq:lyap_decay}
    \dot{V} = -\sigma(V, t).
\end{equation}
The framework is agnostic to the specific form of $\sigma$; four well-studied instances from the finite-time stability literature~\cite{bhat_finite-time_2000, polyakov_nonlinear_2012, song_time-varying_2017} are:
\begin{description}
    \item[Exponential~\cite{khalil_nonlinear_2002}:] 
    \begin{equation}
        \sigma = cV,\quad c > 0.
    \end{equation}
    Yields asymptotic decay $V(t) = V(0)\,e^{-ct}$.
    \item[Finite-time~\cite{bhat_finite-time_2000}:]
    \begin{equation}
        \sigma = c V^\alpha, \quad\alpha \in (0,1).
    \end{equation}
    Drives $V$ to zero in finite time $T_s$ that depends on $V(0)$.
    \item[Fixed-time~\cite{polyakov_nonlinear_2012}:] 
    \begin{equation}
        \sigma = c_1 V^\alpha + c_2 V^p, \quad0 < \alpha < 1 < p.
    \end{equation}
    Settling time is bounded independently of $V(0)$.
    \item[Prescribed-time~\cite{song_time-varying_2017}:] 
    \begin{equation}
        \sigma = \mu_T\frac{V} {T - t}, \quad\mu_T > 1.
    \end{equation}
    Enforces $V(T) = 0$ exactly at the user-specified instant~$T$.
\end{description}
All four laws satisfy $\sigma(0, t) = 0$, ensuring that $V = 0$ is an equilibrium.

Having specified the convergence law, the feedback law follows by differentiating $V$ along~\eqref{eq:single_integrator} and equating with~\eqref{eq:lyap_decay}:
\begin{equation} \label{eq:olf_feedback}
    \bu = -\frac{\sigma(V(\bx), t)}{\| \nabla_{\bx} V(\bx) \|^2} \nabla_{\bx} V(\bx).
\end{equation}
Direct substitution confirms $\dot{V} = -\sigma(V, t)$ exactly, which is the defining feature of the OLF framework: the Lyapunov decay is enforced as an identity, not merely as an upper bound, and the convergence profile is selected by the user through the choice of~$\sigma$.

The feedback law~\eqref{eq:olf_feedback} is well-defined provided $\nabla_{\bx} V \neq \mathbf{0}$ whenever $V > 0$; if $\nabla_{\bx} V = \mathbf{0}$ at a point where $V > 0$, the gain becomes singular, and the feedback law breaks down. The design challenge is to construct a Lyapunov function $V$ whose zero set coincides with $\cX^*$ and whose gradient does not vanish prematurely, that is, a function that distinguishes saddle points from local minima at the level of both its zero set and its gradient.

A natural first-order candidate is $V_1 = \frac{1}{2}\|\bg\|^2$, which vanishes at all critical points. However, $V_1$ is blind to curvature: it treats saddle points and local minima identically. Moreover, its gradient $\nabla_{\bx} V_1 = \bH \bg$ passes through the Hessian, so whenever $\bg \in \ker(\bH)$, a condition that holds along certain trajectories approaching rank-deficient critical points, the gradient vanishes while $V_1 > 0$, violating the regularity condition and rendering~\eqref{eq:olf_feedback} ill-defined.

\subsection{Standing Assumptions}
\label{sec:assumptions}

The following conditions on the objective $J$ are assumed throughout this letter.
\begin{assumption}[Smoothness] \label{ass:smooth}
$J \in C^4(\Rn)$.
\end{assumption}
This ensures that $\bH(\bx)$ is twice continuously differentiable, 
as required by the chain rule in Prop.~\ref{prop:regularity}.

\begin{assumption}[Eigenvalue Simplicity] \label{ass:landscape}
At every strict saddle point $\bx^*$ of $J$, the minimum eigenvalue $\lambda_{\min}(\bH(\bx^*))$ is a simple eigenvalue and satisfies $\nabla_{\bx} \lambda_{\min}(\bx^*) \neq \mathbf{0}$.
\end{assumption}
Simplicity of $\lambda_{\min}$ holds generically in the space of real symmetric matrices~\cite{magnus_differentiating_1985}, and when it holds, $\lambda_{\min}(\bx)$ is differentiable by first-order eigenvalue perturbation theory. The condition $\nabla_{\bx} \lambda_{\min} \neq \mathbf{0}$ is itself generic: $\bg = \mathbf{0}$ and $\nabla_{\bx} \lambda_{\min} = \mathbf{0}$ simultaneously impose $2n$ equations on $n$ unknowns. \rev{Simplicity is required only at strict saddles of $J$ (used in Prop.~\ref{prop:saddle_destab}); Prop.~\ref{prop:regularity} handles multiplicity elsewhere.}

\begin{assumption}[Compact Sublevel Sets] \label{ass:compact}
The sublevel sets $\{ \bx \in \Rn : J(\bx) \leq c \}$ are compact for every $c \in \mathbb{R}$.
\end{assumption}
This standard regularity condition ensures trajectories remain bounded.

\section{Curvature-Regularized Gradient Dynamics}
\label{sec:contributions}

\subsection{The Augmented Cost}

The analysis in Sec.~\ref{sec:olf} identified curvature blindness as the root limitation of the squared norm of the gradient as the OLF. To encode second-order information, we augment the objective with a smooth penalty on the \rev{negative Hessian spectrum}. \rev{Let $\lambda_1(\bx), \ldots, \lambda_n(\bx)$ denote the eigenvalues of $\bH(\bx)$ and define the smooth negative-part function 
\begin{equation}
    \psi_\varepsilon(\lambda) = \frac{1}{2}(\sqrt{\lambda^2 + \varepsilon^2} - \lambda)
\end{equation} 
for a fixed smoothing parameter $\varepsilon > 0$, so that $\psi_\varepsilon(\lambda) \to \max(0, -\lambda)$ as $\varepsilon \to 0^+$.} We define the augmented cost
\begin{equation} \label{eq:Phi}
    \Phi(\bx) = J(\bx) + P(\bx) = J(\bx) + 
    \rev{\tfrac{\beta^2}{2} \sum_{i=1}^{n} \psi_\varepsilon\bigl(\lambda_i(\bx)\bigr)^2},
\end{equation}
where $P(\bx)$ is the curvature penalty and $\beta > 0$ is a curvature penalty weight. \rev{The curvature penalty is nonnegative and, for small~$\varepsilon$, is negligible when $\bH(\bx) \succ 0$ while $P(\bx) \approx \frac{\beta^2}{2}\lambda_{\min}^2$ at strict saddle points where $\lambda_{\min}$ is the unique negative eigenvalue, thus} the penalty lifts the cost landscape precisely where second-order optimality is violated.

\subsection{Regularity and Gradient}

\rev{The curvature penalty, $P(\bx)$, is a spectral function, which is a symmetric function of the eigenvalues composed with the Hessian map. Since $\psi_\varepsilon$ is analytic for $\varepsilon > 0$, the spectral mapping theorem for analytic symmetric functions~\cite{lewis_derivatives_1996} yields the following global regularity.}

\begin{proposition}[Regularity of $\Phi$] \label{prop:regularity}
\rev{Under Assumption~\ref{ass:smooth}, $\Phi \in C^2(\Rn)$ for any $\varepsilon > 0$.}
\end{proposition}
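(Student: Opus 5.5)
The plan is to write the penalty as a composition $P = F\circ\bH$, where $\bH:\Rn\to\mathbb{S}^n$ is the Hessian map and $F:\mathbb{S}^n\to\mathbb{R}$ is the spectral function
\begin{equation*}
F(A)=\tfrac{\beta^2}{2}\sum_{i=1}^n f(\lambda_i(A)),\qquad f(\lambda)=\psi_\varepsilon(\lambda)^2 .
\end{equation*}
Here $f$ is a scalar function of a single eigenvalue, applied symmetrically to all of them. By Assumption~\ref{ass:smooth}, $J\in C^4$, so $\bH\in C^2(\Rn;\mathbb{S}^n)$ and $J$ itself is $C^2$. Since $C^2$ functions are closed under composition by the chain rule, it suffices to show $F\in C^2(\mathbb{S}^n)$; in fact we will show $F$ is real-analytic.

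The key step removes any dependence on eigenvalue differentiability, which fails at repeated eigenvalues. For $\varepsilon>0$, $\sqrt{\lambda^2+\varepsilon^2}$ is real-analytic on all of $\mathbb{R}$ because $\lambda^2+\varepsilon^2\ge\varepsilon^2>0$. Hence $f$ is real-analytic, and therefore so is the primary matrix function $f(A)=U\,\mathrm{diag}(f(\lambda_i))\,U^\top$, which gives
\begin{equation*}
F(A)=\tfrac{\beta^2}{2}\,\mathrm{tr}\, f(A).
\end{equation*}
To justify the smoothness of $A\mapsto \mathrm{tr}\, f(A)$ directly, I would use the Cauchy/Riesz integral representation. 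Fix $A_0$ and choose a contour $\Gamma$ in the domain of holomorphy of $f$, which is a strip $|\mathrm{Im}\,z|<\varepsilon$ since the branch points are at $\pm i\varepsilon$. The contour should enclose a real interval containing the spectra of all $A$ near $A_0$. Then
\begin{equation*}
\mathrm{tr}\, f(A)=\frac{1}{2\pi i}\oint_\Gamma f(z)\,\mathrm{tr}\,(zI-A)^{-1}\,dz .
\end{equation*}
The resolvent is analytic in $A$ uniformly for $z\in\Gamma$, so $F$ is $C^\infty$ near $A_0$. Alternatively, one can cite directly the result of~\cite{lewis_derivatives_1996} that a spectral function generated by a symmetric $C^k$ (or analytic) function is $C^k$ (analytic).

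Combining the pieces, $P=F\circ\bH$ is $C^2$, and so $\Phi=J+P\in C^2(\Rn)$. As a byproduct, the chain rule gives the explicit gradient
\begin{equation*}
\partial_{x_k}P=\tfrac{\beta^2}{2}\,\mathrm{tr}\bigl(f'(\bH)\,\partial_{x_k}\bH\bigr),
\end{equation*}
which is well defined even at repeated eigenvalues.

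The main obstacle is the repeated-eigenvalue issue: individual $\lambda_i(\bx)$ are only Lipschitz there, so a term-by-term argument fails. The contour argument handles this because it only uses the symmetric sum. The remaining care is choosing $\Gamma$ inside the strip $|\mathrm{Im}\,z|<\varepsilon$, which is always possible because the spectra are real.
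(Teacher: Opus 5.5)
Your proof is correct and has the same skeleton as the paper's: write $P=F\circ\bH$ with $F$ a spectral function on $\mathbb{S}^n$, note that $\bH\in C^2$ because $J\in C^4$, and conclude by the chain rule.

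The difference is in how you show $F$ is smooth. The paper cites the general theorem~\cite{tsing_analyticity_1994}: every analytic symmetric function of the eigenvalues induces an analytic map on $\mathbb{S}^n$. You instead use the separable structure $F(A)=\tfrac{\beta^2}{2}\,\mathrm{tr}\,f(A)$ with $f=\psi_\varepsilon^2$, and prove analyticity directly from the Cauchy--Riesz integral. This works because $f$ is holomorphic on the strip $|\mathrm{Im}\,z|<\varepsilon$ (there $\mathrm{Re}(z^2+\varepsilon^2)>0$, so the principal root is holomorphic). It also needs the resolvent to be jointly smooth on $\Gamma$ times a neighborhood of $A_0$, which holds.

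The paper's route is shorter and would also cover non-separable symmetric functions. Yours is self-contained and buys two things:
\begin{itemize}
\item It shows concretely where $\varepsilon>0$ enters. As $\varepsilon\to0^+$ the branch points $\pm i\varepsilon$ pinch the real axis, so no admissible contour survives around a zero eigenvalue. This matches the paper's remark that only $C^{1,1}$ regularity holds at $\varepsilon=0$.
\item It gives the basis-free gradient $\partial_{x_k}P=\tfrac{\beta^2}{2}\,\mathrm{tr}\bigl(f'(\bH)\,\partial_{x_k}\bH\bigr)$. This rigorously justifies the paper's informal claim that~\eqref{eq:grad_Phi} is well defined at eigenvalue crossings.
\end{itemize}

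One small correction: your fallback citation~\cite{lewis_derivatives_1996} concerns first derivatives of spectral functions, not $C^k$ regularity or analyticity. The analytic statement you want is the one in~\cite{tsing_analyticity_1994}.
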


\begin{proof}
\rev{The function $g(\lambda) = \frac{1}{2}\psi_\varepsilon(\lambda)^2$ is analytic for $\varepsilon > 0$, so $f(\lambda_1, \ldots, \lambda_n) = \sum_i g(\lambda_i)$ is an analytic symmetric function. By the analyticity theorem of~\cite{tsing_analyticity_1994}, any analytic symmetric function of eigenvalues induces an analytic (hence $C^\infty$) map on the space of real symmetric matrices; Lewis~\cite{lewis_derivatives_1996} provides explicit derivative formulae for the resulting spectral function. Since $\bH = \nabla^2 J \in C^2(\Rn; \mathbb{S}^n)$ under Assumption~\ref{ass:smooth}, the chain rule gives $P \in C^2(\Rn)$ and hence $\Phi = J + P \in C^2(\Rn)$.}
\end{proof}

\rev{For the non-smoothed penalty, i.e., $\varepsilon = 0$ thus rendering $\psi_0 = \max(0,-\lambda)$, the same spectral mapping argument yields $\Phi \in C^{1,1}(\Rn)$ globally; the smoothing $\varepsilon > 0$ upgrades this to $C^2$, as required for the Hessian analysis in Prop.~\ref{prop:spurious}.}

\rev{Let $\bu_i(\bx)$ denote a unit eigenvector of $\bH(\bx)$ associated with $\lambda_i$ and define the curvature sensitivity vectors $\bw_i \in \Rn$ with components}
\begin{equation} \label{eq:w_min}
    \rev{(w_i)_j = \bu_i^\top \frac{\partial \bH}{\partial x_j} \bu_i, \quad j = 1, \ldots, n;}
\end{equation}
\rev{by eigenvalue perturbation theory~\cite{magnus_differentiating_1985}, $\bw_i = \nabla_{\bx} \lambda_i$ when $\lambda_i$ is simple; at eigenvalue crossings the individual $\bw_i$ are basis-dependent, but $\psi_\varepsilon(\lambda)\psi'_\varepsilon(\lambda)$ is constant within each eigenspace, so the basis-dependent contributions combine into a basis-independent quantity and~\eqref{eq:grad_Phi} is well-defined regardless of multiplicity. We write $\bw_{\min}$ for the vector associated with $\lambda_{\min}$.}

\rev{The gradient of $\Phi$ is}
\begin{equation} \label{eq:grad_Phi}
    \nabla \Phi = \bg + \rev{\beta^2 \sum_{i=1}^{n} \psi_\varepsilon(\lambda_i)\,\psi_\varepsilon'(\lambda_i) \, \bw_i}.
\end{equation}
\rev{The curvature correction is driven by the negative part of the spectrum, since $\psi_\varepsilon(\lambda)\,\psi_\varepsilon'(\lambda)$ is negligible for $\lambda \gg \varepsilon$. At points where $\lambda_{\min}$ is the sole significantly negative eigenvalue, the sum reduces to the single-term approximation $\nabla \Phi \approx \bg - \beta^2 |\lambda_{\min}| \, \bw_{\min}$.} At a strict saddle point where $\bg = \mathbf{0}$ and $\lambda_{\min} < 0$:
\begin{equation} \label{eq:grad_Phi_saddle}
    \nabla \Phi \big|_{\bg = \mathbf{0}} = \rev{\beta^2 \sum_{i} \psi_\varepsilon(\lambda_i)\,\psi_\varepsilon'(\lambda_i) \, \bw_i} \neq \mathbf{0},
\end{equation}
provided $\bw_{\min} \neq \mathbf{0}$ (a generic condition; see Assumption~\ref{ass:landscape})\rev{, since the $\lambda_{\min}$ term dominates the sum}. Thus, saddle points of $J$ are not critical points of $\Phi$: the CRGD dynamics go through them rather than getting trapped. \rev{Importantly, the curvature information enters through the $\bw_i$ rather than through a Hessian-gradient product, avoiding the singularity that plagues the $V_1 = \frac{1}{2}\|\bg\|^2$ candidate.}

\subsection{Dynamics}
\label{sec:dynamics}

Applying the OLF feedback law~\eqref{eq:olf_feedback} with the Lyapunov function $V = \Phi - \Phi^*$, where $\Phi^* := \inf_{\bx \in \cX^*} \Phi(\bx)$, yields the CRGD control law
\begin{equation} \label{eq:crgd}
    \bu = -\frac{\sigma(\Phi(\bx) - \Phi^*, t)}{\| \nabla \Phi(\bx) \|^2} \nabla \Phi(\bx).
\end{equation}
When $\Phi^*$ is not known a priori, \rev{any computable lower bound $\tilde{\Phi}^* \leq \Phi^*$ may be used; $\tilde{\Phi}^* = 0$ is valid whenever $J \geq 0$ (e.g., matrix factorization, in Sec.~\ref{sec:results} below). For general bounded-below objectives, a tighter bound must be supplied; choosing $\tilde{\Phi}^* > \Phi^*$ causes premature termination at non-stationary points.} 
The well-posedness of~\eqref{eq:crgd} requires the following regularity condition on the augmented cost.

\begin{assumption}[Well-Posedness] \label{ass:nondegen}
$\nabla \Phi(\bx) \neq \mathbf{0}$ whenever $\Phi(\bx) > \Phi^*$ and $\bx$ is not a local minimizer of $\Phi$.
\end{assumption}

This condition ensures the feedback law~\eqref{eq:crgd} is well-defined along the trajectory. Assumption~\ref{ass:landscape} provides an essential ingredient: at strict saddle points of $J$, \rev{Prop.~\ref{prop:saddle_destab} ensures $\nabla \Phi \neq \mathbf{0}$}. Assumption~\ref{ass:nondegen} extends this to the full trajectory, requiring that $\bg$ and the curvature correction in~\eqref{eq:grad_Phi} do not cancel along the path to convergence. \rev{Such cancellation is a non-generic condition of codimension~$n$ that can be verified on any specific landscape.} More broadly, Assumption~\ref{ass:nondegen} is a non-singularity condition on the critical points of~$\Phi$, analogous to the hyperbolicity requirement for equilibria of linear systems~\cite[Ch.~2]{khalil_nonlinear_2002}. Competing saddle-escape methods impose analogous structural conditions:~\cite{paternain_newton_2019} requires all saddle eigenvalues to be bounded away from zero, while~\cite{jin_nonconvex_2021} provides only probabilistic guarantees.

\begin{remark}[Role of $\beta$ and Computational Cost] \label{rem:beta_cost}
The curvature weight $\beta$ must be positive to activate the curvature correction (Prop.~\ref{prop:saddle_destab}) and satisfy $\beta < \beta^*$ to ensure spurious critical points are saddles of $\Phi$ (Prop.~\ref{prop:spurious}). \rev{Conversely, $\beta$ must be sufficiently large for the penalty gradient to dominate the objective gradient near saddle points; otherwise $\Phi$ admits slow progress with residual negative curvature.} 
The dominant per-step cost is the $O(n^3)$ eigendecomposition of $\bH$\rev{; since the dominant contributions to $\nabla P$ come from the negative eigenvalues, Lanczos iteration can
reduce this to $O(n^2)$~\cite{carmon_accelerated_2018}.}
\end{remark}

\subsection{Convergence Analysis}

We now establish the main theoretical results. The first proposition shows that saddle points of $J$ are not equilibria of the CRGD dynamics.

\begin{proposition}[Saddle Points are Non-Critical for $\Phi$] \label{prop:saddle_destab}
Let $\bx^*$ be a strict saddle point of $J$ with $\lambda_{\min}(\bx^*) < 0$, and suppose $\bw_{\min}(\bx^*) \neq \mathbf{0}$. Then $\nabla \Phi(\bx^*) \neq \mathbf{0}$, and $\bx^*$ is not an equilibrium of~\eqref{eq:crgd}.
\end{proposition}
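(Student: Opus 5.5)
The plan is to evaluate the gradient formula \eqref{eq:grad_Phi} at $\bx^*$. Since $\bg(\bx^*)=\mathbf{0}$, only the curvature correction in \eqref{eq:grad_Phi_saddle} remains, and it suffices to show that this sum is nonzero. By Assumption~\ref{ass:landscape}, $\lambda_{\min}(\bx^*)$ is simple. First-order perturbation theory~\cite{magnus_differentiating_1985} then gives $\lambda_{\min}$ differentiable near $\bx^*$ with $\nabla\lambda_{\min}(\bx^*)=\bw_{\min}(\bx^*)$. This vector is well-defined independently of the sign of the eigenvector and is nonzero by hypothesis.

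Next, I group the remaining eigenvalues into their distinct eigenspaces. As noted after \eqref{eq:w_min}, $h(\lambda):=\psi_\varepsilon(\lambda)\psi_\varepsilon'(\lambda)$ is constant on each eigenspace, so the sum equals $h(\lambda_{\min})\bw_{\min}+\sum_{k} h(\mu_k)\,\mathbf{v}_k$, where $\mu_k$ ranges over the other distinct eigenvalues and $\mathbf{v}_k=\sum_{i:\lambda_i=\mu_k}\bw_i$. The vector $\mathbf{v}_k$ is the gradient of the (smooth) trace of the spectral projection onto the $\mu_k$-cluster. A direct computation gives $\psi_\varepsilon'(\lambda)=\tfrac12(\lambda/\sqrt{\lambda^2+\varepsilon^2}-1)<0$ and $\psi_\varepsilon>0$. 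Hence $h<0$ everywhere, and $|h(\lambda_{\min})|$ is the largest of the values $|h(\lambda_i)|$, because $|h|=\psi_\varepsilon|\psi_\varepsilon'|$ is decreasing in $\lambda$ (both factors are positive and decreasing). In particular $h(\lambda_{\min})\neq 0$, so the leading term $h(\lambda_{\min})\bw_{\min}$ is nonzero.

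The main obstacle is excluding cancellation between the leading term and the remaining terms. The structure alone does not force this, because the $\mathbf{v}_k$ are arbitrary vectors built from third derivatives of $J$. I would first follow the paper's own dominance argument. At a strict saddle, the other eigenvalues are either positive, where $|h(\mu)|\le \varepsilon^2/(8\mu^3)$-type bounds make them negligible, or negative with smaller magnitude and hence smaller weight. A quantitative version requires $|h(\lambda_{\min})|\,\|\bw_{\min}\| > \sum_k |h(\mu_k)|\,\|\mathbf{v}_k\|$. This holds whenever $\lambda_{\min}$ is the sole significantly negative eigenvalue and $\varepsilon$ is small relative to the gap between $\lambda_{\min}$ and the rest of the spectrum, which is the regime the paper's discussion after \eqref{eq:grad_Phi_saddle} adopts. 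If a fully general argument is wanted, I would fall back on genericity: cancellation imposes $n$ equations on the third-order data at $\bx^*$ and is non-generic, in the same spirit as Assumption~\ref{ass:nondegen}.

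Once $\nabla\Phi(\bx^*)\neq\mathbf{0}$ is established, the conclusion follows. Assuming $\Phi(\bx^*)>\Phi^*$, so that $\sigma>0$ for the four listed laws, the feedback \eqref{eq:crgd} is well-defined at $\bx^*$ and gives $\bu(\bx^*)=-\sigma\nabla\Phi/\|\nabla\Phi\|^2\neq\mathbf{0}$. Therefore $\bx^*$ is not an equilibrium. If instead $\Phi(\bx^*)=\Phi^*$, then $\bx^*$ would be a minimizer of $\Phi$ over the relevant level, which contradicts $\nabla\Phi(\bx^*)\neq\mathbf{0}$ together with the descent direction $-\nabla\Phi$. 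This last case needs a short check, possibly assuming $\Phi^*$ is attained only on $\cX^*$.
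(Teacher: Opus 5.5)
Your route is the paper's. At $\bx^*$ only the curvature sum in \eqref{eq:grad_Phi_saddle} survives. Every weight $h(\lambda_i)=\psi_\varepsilon(\lambda_i)\psi_\varepsilon'(\lambda_i)$ is strictly negative, and your observation that $|h|$ is decreasing is a useful addition. So the $\lambda_{\min}$ term is nonzero, and everything hinges on excluding cancellation. The paper settles that step by calling cancellation non-generic and deferring to Assumption~\ref{ass:nondegen}, so its proof has the same hole you identify.

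You should recognize, though, that this hole cannot be closed under the stated hypotheses: the proposition is false as written. For $n=2$ and $0<a<1$, take $J(\bx)=-\tfrac12x_1^2-\tfrac a2x_2^2+\tfrac16x_1^3-\tfrac c2x_1x_2^2+\|\bx\|^4$. At $\bx^*=\mathbf 0$:
\begin{itemize}
\item $\bg=\mathbf 0$ and $\bH=\mathrm{diag}(-1,-a)$, so $\lambda_{\min}=-1$ is simple;
\item $\bw_{\min}=(1,0)^\top$ and $\bw_2=(-c,0)^\top$;
\item hence $\nabla\Phi(\mathbf 0)=\beta^2\bigl(h(-1)-c\,h(-a)\bigr)(1,0)^\top$, which vanishes for $c=h(-1)/h(-a)>0$.
\end{itemize}
Replacing $-a$ by any $\mu>0$ and taking $c=h(-1)/h(\mu)$ gives a counterexample with a single negative eigenvalue. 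So your gloss ``$\varepsilon$ small relative to the spectral gap'' is also not enough: the threshold depends on the third-derivative ratios $\|\mathbf v_k\|/\|\bw_{\min}\|$ as well.

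What you can honestly prove is two things. First, $\nabla\Phi(\bx^*)\neq\mathbf 0$ holds under your quantitative inequality, stated as an explicit hypothesis. Second, when $\lambda_{\min}$ is the only negative eigenvalue, that inequality holds for every $\varepsilon$ below an $\bx^*$-dependent threshold. This is because the $\bw_i$ do not depend on $\varepsilon$, while $h(\lambda_{\min})\to\lambda_{\min}$ and $h(\mu)\to 0$ for $\mu\ge 0$ as $\varepsilon\to 0^+$. With several negative eigenvalues no choice of $\varepsilon$ helps. The genericity fallback only says such $J$ are exceptional; it is an assumption, not a proof for a given $J$.

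Two smaller repairs are needed.

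First, $\mathbf v_k$ is the gradient of $\operatorname{tr}(\Pi_k\bH)$, the sum of the eigenvalues in the cluster. The quantity $\operatorname{tr}\Pi_k$ is the constant multiplicity, so its gradient is zero.

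Second, your handling of the case $\Phi(\bx^*)=\Phi^*$ fails. Since $\Phi^*$ is an infimum over $\cX^*$ only, equality does not make $\bx^*$ a minimizer of $\Phi$. Worse, in that case $\sigma=0$ forces $\bu(\bx^*)=\mathbf 0$ in \eqref{eq:crgd}, so $\bx^*$ would be an equilibrium even though $\nabla\Phi(\bx^*)\neq\mathbf 0$. The case must be excluded directly:
\begin{itemize}
\item $\psi_\varepsilon(\lambda)^2\ge\lambda^2+\varepsilon^2/4$ for $\lambda<0$, and $\psi_\varepsilon\le\varepsilon/2$ on $[0,\infty)$;
\item a global minimizer $\bx_m\in\cX^*$ of $J$ exists by Assumption~\ref{ass:compact}, and $J(\bx_m)\le J(\bx^*)$;
\item comparing with $\bx_m$ gives $\Phi(\bx^*)-\Phi^*\ge\Phi(\bx^*)-\Phi(\bx_m)\ge\tfrac{\beta^2}{2}\bigl(\lambda_{\min}^2-\tfrac{(n-1)\varepsilon^2}{4}\bigr)$.
\end{itemize}
This is strictly positive whenever $(n-1)\varepsilon^2<4\lambda_{\min}^2$. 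The paper's proof never addresses the ``not an equilibrium'' clause, so with this fix your argument covers something the paper leaves implicit.
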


\begin{proof}
At $\bx^*$, $\bg(\bx^*) = \mathbf{0}$ and $\lambda_{\min}(\bx^*) < 0$, so~\eqref{eq:grad_Phi_saddle} gives $\nabla \Phi(\bx^*) = \rev{\beta^2 \sum_i \psi_\varepsilon(\lambda_i)\,\psi_\varepsilon'(\lambda_i)\,\bw_i}$\rev{. Since every coefficient $\psi_\varepsilon(\lambda_i)\psi_\varepsilon'(\lambda_i)$ is strictly negative, the $\lambda_{\min}$ term contributes a nonzero vector proportional to~$\bw_{\min}(\bx^*) \neq \mathbf{0}$; exact cancellation by the remaining terms is a non-generic condition (codimension~$n$), and Assumption~\ref{ass:nondegen} provides the formal guarantee along trajectories}.
\end{proof}

\rev{The dynamics~\eqref{eq:crgd} may, however, introduce spurious critical points of $\Phi$: points where the gradient and curvature correction in~\eqref{eq:grad_Phi} balance to give $\nabla \Phi = \mathbf{0}$, with $\lambda_{\min} < 0$.} The following proposition establishes that these are not attractors.

\begin{proposition}[Spurious Points are Saddles of $\Phi$] \label{prop:spurious}
Let $\bar{\bx}$ be a critical point of $\Phi$ with $\lambda_{\min}(\bar{\bx}) < 0$. 
Then there exists $\beta^*$, such that for any $0 < \beta < \beta^*$, $\nabla^2 \Phi(\bar{\bx})$ has at least one strictly negative eigenvalue.
\end{proposition}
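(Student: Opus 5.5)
The plan is to write $\nabla^2\Phi(\bar{\bx}) = \bH(\bar{\bx}) + \nabla^2 P(\bar{\bx})$ and treat the penalty Hessian as a perturbation of order $\beta^2$. Differentiating \eqref{eq:grad_Phi} once more (legitimate by Prop.~\ref{prop:regularity}) gives
\begin{equation*}
\nabla^2 P = \beta^2 \sum_{i} \Bigl[ \bigl(\psi_\varepsilon'(\lambda_i)^2 + \psi_\varepsilon(\lambda_i)\psi_\varepsilon''(\lambda_i)\bigr)\,\bw_i \bw_i^\top + \psi_\varepsilon(\lambda_i)\psi_\varepsilon'(\lambda_i)\,\nabla \bw_i \Bigr],
\end{equation*}
where $\nabla \bw_i$ collects the second derivatives of the eigenvalues. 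These involve $\partial^2\bH$, which is continuous by Assumption~\ref{ass:smooth}, together with eigenvector-derivative terms. At crossings I would use the basis-independent spectral-function formulae of~\cite{lewis_derivatives_1996} rather than the individual $\bw_i$. The first step is the uniform bound $\|\nabla^2 P(\bx)\| \le \beta^2 M(\bx)$, with $M$ depending only on $J$ and $\varepsilon$ and not on $\beta$. $M$ is continuous in $\bx$. Since $\beta$ will be bounded, the critical points in question should lie in a compact sublevel set of $J$ by Assumption~\ref{ass:compact}, since $\Phi \ge J$. So $M$ admits a uniform bound $\bar M$ there.

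The second step is the test direction. I take $\bu_{\min}(\bar{\bx})$, the unit eigenvector of $\bH(\bar{\bx})$ for $\lambda_{\min}(\bar{\bx})<0$. Then
\begin{equation*}
\bu_{\min}^\top \nabla^2\Phi(\bar{\bx})\,\bu_{\min} \le \lambda_{\min}(\bar{\bx}) + \beta^2 \bar M .
\end{equation*}
This is strictly negative as soon as $\beta^2 < |\lambda_{\min}(\bar{\bx})|/\bar M$. By Rayleigh's principle, $\nabla^2\Phi(\bar{\bx})$ then has a strictly negative eigenvalue. For a fixed $\bar{\bx}$, this gives the pointwise threshold $\beta^* = \sqrt{|\lambda_{\min}(\bar{\bx})|/\bar M}$, which is what the statement requires.

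The main obstacle is that $\bar{\bx}$ itself depends on $\beta$, since it is defined by $\nabla\Phi(\bar{\bx})=\mathbf{0}$. A threshold that is meaningful uniformly therefore needs $|\lambda_{\min}(\bar{\bx})|$ bounded away from zero as $\beta\to 0$. To get this I would argue as follows. From $\bg(\bar{\bx}) = -\beta^2\sum_i \psi_\varepsilon\psi_\varepsilon'\bw_i$ we get $\|\bg(\bar{\bx})\| = O(\beta^2)$, so spurious points accumulate, as $\beta\to 0$, only at critical points of $J$ in the compact sublevel set. If the accumulation point is a strict saddle, $\lambda_{\min}$ stays bounded away from zero by continuity, and the argument above applies with a uniform constant.

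If instead $\lambda_{\min}(\bar{\bx})\to 0$, the quadratic-form bound degenerates, and I would fall back on the finer structure. The dominant penalty term is $\beta^2\psi_\varepsilon'(\lambda_{\min})^2\,\bw_{\min}\bw_{\min}^\top$, which is positive semidefinite. So I would choose a test direction orthogonal to $\bw_{\min}$ within the negative eigenspace when the dimension permits. Otherwise I would use Assumption~\ref{ass:landscape} and the condition $\bw_{\min}\neq\mathbf{0}$ to control the coupling. I expect this degenerate case to be the delicate part. If necessary, I would simply state $\beta^*$ as depending on $\bar{\bx}$ through $|\lambda_{\min}(\bar{\bx})|$.
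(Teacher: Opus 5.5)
Your core argument is the paper's: you evaluate $\nabla^2\Phi(\bar{\bx})$ along the unit eigenvector $\bu_{\min}$ of $\bH(\bar{\bx})$. You note that the quadratic form is $\lambda_{\min}(\bar{\bx})$ plus $\beta^2$ times a $\beta$-independent quantity. The paper calls this quantity $C(\bar{\bx})$; you bound it by $\|\nabla^2 P(\bar{\bx})\|/\beta^2$. The threshold then follows, and this fully proves the pointwise statement. The compactness step is unnecessary for a fixed $\bar{\bx}$. As phrased it is also incomplete: $\Phi\ge J$ places $\bar{\bx}$ in a compact set only once you have an a priori bound on $\Phi(\bar{\bx})$, such as $\Phi(\bar{\bx})\le\Phi(\bx_0)$ for points reached by a trajectory with $\beta$ bounded.

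Where you genuinely differ is in handling the $\beta$-dependence of $\cS$, and your treatment is the more accurate one.

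\textbf{The paper's route.} It sets $\beta^*$ to the ratio of $\inf_{\cS}|\lambda_{\min}|$ to $\sup_{\cS}|C|$. It then dismisses the circularity by asserting that $\cS=\emptyset$ for small $\beta$. That assertion fails. Since $\nabla\Phi=\bg+\beta^2\nabla(P/\beta^2)$ with $P/\beta^2\in C^2$ independent of $\beta$, the implicit function theorem applies at every strict saddle $\bx_s$ of $J$ with $\bH(\bx_s)$ nonsingular. It continues $\bx_s$ to a critical point of $\Phi$ within $O(\beta^2)$ of $\bx_s$, and by continuity that point still has $\lambda_{\min}<0$. So $\cS$ is nonempty for all small $\beta>0$.

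\textbf{Your route.} You observe that spurious points have $\|\bg\|=O(\beta^2)$ on the relevant compact set, so they accumulate at critical points of $J$. Near strict saddles, $|\lambda_{\min}|$ stays bounded away from zero. This is what actually yields a uniform threshold there, at least for finitely many saddles.

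The degenerate accumulation case you flag, $\lambda_{\min}\to 0^-$, is exactly where neither your argument nor the paper's delivers a uniform $\beta^*$. Your fallback, a $\beta^*$ depending on $\bar{\bx}$ through $|\lambda_{\min}(\bar{\bx})|$, is what the statement as written supports. It is also what the paper's proof rigorously establishes.
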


\begin{proof}
At a spurious critical point $\bar{\bx}$\rev{,} $\nabla \Phi = \mathbf{0}$ with $\lambda_{\min} < 0$\rev{; let $\bu_{\min}$ be any unit eigenvector of $\bH(\bar{\bx})$ for $\lambda_{\min}$. By Prop.~\ref{prop:regularity}, $\nabla^2 \Phi(\bar{\bx})$ is well-defined regardless of multiplicity}. \rev{Evaluating the quadratic form of $\nabla^2 \Phi$ along $\bu_{\min}$:}
\begin{align}
    \bu_{\min}^\top &\nabla^2 \Phi \, \bu_{\min} = \underbrace{\lambda_{\min}}_{<\,0} + \beta^2 \underbrace{\bu_{\min}^\top \nabla^2_{\bx} P \, \bu_{\min} / \beta^2}_{=:\,C(\bar{\bx})}\rev{,} \notag
\end{align}
\rev{where $C(\bar{\bx})$ collects the curvature penalty's second-order contribution along~$\bu_{\min}$, including the dominant $\lambda_{\min}$ term and corrections from the remaining eigenvalues.}
At $\beta = 0$ the quadratic form equals $\lambda_{\min} < 0$. By continuity, for all $\beta^2 < |\lambda_{\min}| / |C(\bar{\bx})|$ the quadratic form remains strictly negative (if $C(\bar{\bx}) = 0$, \rev{it is negative} for all $\beta > 0$). Taking the infimum over all spurious critical points yields the threshold
\begin{equation} \label{eq:beta_star}
    \beta^* := \sqrt{ \frac{\inf_{\bar{\bx} \in \cS} |\lambda_{\min}(\bar{\bx})|}{\sup_{\bar{\bx} \in \cS} \rev{|C(\bar{\bx})|}} }\rev{,}
\end{equation}
with $\cS$ denoting the set of spurious critical points of $\Phi$ (those with $\lambda_{\min} < 0$)\rev{.}
The set $\cS$ depends implicitly on $\beta$; however, for $\beta$ sufficiently small, $\cS$ is empty \rev{ (via Prop.~\ref{prop:saddle_destab} and Assumption~\ref{ass:compact})} and $\beta^*$ is vacuously infinite, so the threshold becomes relevant only when spurious critical points exist.
\end{proof}

\begin{remark} \label{rem:lyap_spurious}
Since the dynamics~\eqref{eq:crgd} enforce $\dot{\Phi} = -\sigma < 0$, the trajectory cannot converge to a strict saddle point of $\Phi$. To see this, note that the stable manifold theorem~\cite{khalil_nonlinear_2002} implies that the set of initial conditions whose trajectories converge to a saddle $\bar{\bx}$ of $\Phi$ is a smooth manifold of dimension at most $n - 1$, and hence has Lebesgue measure zero in $\Rn$. Consequently, for generic initial conditions all limit points are local minima of $\Phi$, which satisfy $\lambda_{\min} \geq 0$ and hence are SOSPs of $J$.
\end{remark}

The preceding results combine into the main convergence guarantee.

\begin{theorem}[\rev{Convergence with Selectable Rate}] \label{thm:convergence}
Let Assumptions~\ref{ass:smooth}--\ref{ass:compact} and~\ref{ass:nondegen} hold and let $0 < \beta < \beta^*$. \rev{The CRGD dynamics~\eqref{eq:crgd} satisfy $\dot{\Phi} = -\sigma(\Phi - \Phi^*, t)$ on $\{\Phi > \Phi^*\}$, and $\bx(t)$ converges at the prescribed rate to a limit point $\bx^*$ with $\bg(\bx^*) = O(\beta^2 n \varepsilon)$ and $\bH(\bx^*) \succeq 0$, recovering $\cX^*$ as $\varepsilon \to 0^+$}.
\end{theorem}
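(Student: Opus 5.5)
The proof splits into three parts: the decay identity, convergence of $\bx(t)$ to a critical point of $\Phi$, and translation of that point's properties back to $J$.

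\emph{Decay identity.} Under Assumption~\ref{ass:nondegen}, $\nabla\Phi(\bx)\neq\mathbf{0}$ wherever $\Phi>\Phi^*$ along non-minimizing points. Hence the feedback~\eqref{eq:crgd} is well defined and, by Prop.~\ref{prop:regularity}, locally Lipschitz, so the closed loop has a local solution. Differentiating along~\eqref{eq:single_integrator} gives
\[
\dot\Phi=\nabla\Phi^\top\bu=-\sigma(\Phi-\Phi^*,t)
\]
exactly, as in~\eqref{eq:olf_feedback}. Therefore $V=\Phi-\Phi^*$ obeys the scalar comparison ODE of the chosen law, and $V(t)$ follows the prescribed profile (exponential, finite-, fixed- or prescribed-time).

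\emph{Boundedness and limit points.} Since $P\ge 0$, $J(\bx(t))\le\Phi(\bx(t))\le\Phi(\bx(0))$. Assumption~\ref{ass:compact} then confines the trajectory to a compact set, which rules out finite escape and makes the $\omega$-limit set nonempty and compact. Because $V$ decreases monotonically to $0$, or is stopped at the boundary where $\nabla\Phi$ vanishes at a local minimizer of $\Phi$, every limit point lies where Assumption~\ref{ass:nondegen} no longer excludes criticality: it is a critical point of $\Phi$ with $\Phi\le\Phi^*$, or a local minimizer of $\Phi$. The key case is $\nabla\Phi(\bx^*)=\mathbf{0}$, and I would split it by the sign of $\lambda_{\min}(\bx^*)$.

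\emph{Case $\lambda_{\min}(\bx^*)<0$.} Either $\bx^*$ is a strict saddle of $J$, which Prop.~\ref{prop:saddle_destab} forbids as a critical point of $\Phi$, or $\bx^*\in\cS$. In the latter case Prop.~\ref{prop:spurious}, with $\beta<\beta^*$, makes $\bx^*$ a strict saddle of $\Phi$. Remark~\ref{rem:lyap_spurious} then excludes it as a limit point for generic initial conditions, and the theorem should be read in this generic sense. Thus $\lambda_{\min}(\bx^*)\ge 0$, i.e., $\bH(\bx^*)\succeq0$.

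\emph{Gradient bound.} From~\eqref{eq:grad_Phi} with $\nabla\Phi=\mathbf{0}$,
\[
\bg=-\beta^2\sum_i\psi_\varepsilon(\lambda_i)\psi_\varepsilon'(\lambda_i)\bw_i .
\]
For $\lambda\ge0$, $0\le\psi_\varepsilon(\lambda)\le\varepsilon/2$ (its maximum is at $\lambda=0$) and $|\psi_\varepsilon'|\le 1$. Hence each coefficient is at most $\varepsilon/2$ in magnitude. Each $\|\bw_i\|$ is bounded by $\sup\|\partial\bH/\partial x_j\|$ over the compact sublevel set, which is finite by Assumption~\ref{ass:smooth}. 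Summing the $n$ terms gives $\|\bg(\bx^*)\|\le\beta^2 n\varepsilon M/2=O(\beta^2 n\varepsilon)$. Letting $\varepsilon\to0^+$ gives $\bg=\mathbf{0}$ and $\bH\succeq0$, i.e., $\bx^*\in\cX^*$.

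\emph{Main obstacle.} The delicate step is passing from ``$V(t)$ decays at the prescribed rate'' to ``$\bx(t)$ converges to a single point at that rate.'' If $\Phi^*$ is not attained at the limit, $V$ cannot actually reach zero, and the gain $\sigma/\|\nabla\Phi\|^2$ blows up as $\nabla\Phi\to\mathbf{0}$. I would handle this by interpreting convergence of $\Phi$ at the prescribed rate and convergence of $\bx$ to the (compact) set of critical points of $\Phi$ in the LaSalle sense. I would then invoke Remark~\ref{rem:lyap_spurious} to conclude that generic limit points are local minima of $\Phi$. Pointwise convergence of $\bx$ would require an additional isolation or Łojasiewicz-type argument, which I would flag rather than fully prove.
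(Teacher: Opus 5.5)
Your proposal follows the paper's proof essentially step for step: the exact decay identity from Assumption~\ref{ass:nondegen} and Prop.~\ref{prop:regularity}, then boundedness from Assumption~\ref{ass:compact}, then exclusion of strict saddles of $J$ by Prop.~\ref{prop:saddle_destab} and of spurious points by Prop.~\ref{prop:spurious} with Remark~\ref{rem:lyap_spurious} (in the generic-initialization sense), and finally the bound $\|\bg(\bx^*)\|=O(\beta^2 n\varepsilon)$ from $\psi_\varepsilon\le\varepsilon/2$ and bounded $\psi_\varepsilon'$ on $\lambda\ge 0$. Your explicit $J\le\Phi$ sublevel-set argument, your uniform bound on $\|\bw_i\|$, and your caveat about convergence to a single point are refinements rather than departures; the paper itself simply asserts that limit points are critical points of $\Phi$ and does not supply that convergence argument either.
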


\begin{proof}
\rev{By Prop.~\ref{prop:regularity}, $\Phi \in C^2(\Rn)$, so $\nabla\Phi$ is $C^1$ and the right-hand side of~\eqref{eq:crgd} is locally Lipschitz wherever $\nabla \Phi \neq \mathbf{0}$.} Assumption~\ref{ass:nondegen} ensures $\| \nabla \Phi \| > 0$ on $\{ \Phi > \Phi^* \}$, so the dynamics are well-defined. Direct computation gives $\dot{\Phi} = \nabla \Phi^\top \bu = -\sigma(\Phi - \Phi^*, t)$. Assumption~\ref{ass:compact} guarantees the trajectory exists for all $t \geq 0$ (or $t \in [0, T)$ for prescribed-time). The convergence rates follow from the explicit solutions of $\dot{V} = -\sigma(V, t)$ with $V = \Phi - \Phi^*$; the reader is referred to~\cite{bhat_finite-time_2000, polyakov_nonlinear_2012, song_time-varying_2017} for details.
It remains to show that limit points are SOSPs of $J$. Since $\dot{\Phi} \leq 0$, the augmented cost is nonincreasing along the trajectory, so any limit point $\bx^*$ is a critical point of $\Phi$. By Prop.~\ref{prop:saddle_destab}, strict saddle points of $J$ are not critical points of $\Phi$. By Prop.~\ref{prop:spurious}, spurious critical points with $\lambda_{\min} < 0$ are saddle points of $\Phi$ for $0 < \beta < \beta^*$. Since $\dot{\Phi} = -\sigma < 0$ wherever $\Phi > \Phi^*$, the trajectory cannot converge to a saddle point of $\Phi$ (Remark~\ref{rem:lyap_spurious}; Assumption~\ref{ass:nondegen} excludes the measure-zero set of pathological initializations). Therefore, $\bx^*$ is a local minimizer of $\Phi$ with $\lambda_{\min}(\bx^*) \geq 0$\rev{; since $\psi_\varepsilon(\lambda) \leq \varepsilon/2$ and $|\psi_\varepsilon'(\lambda)| \leq 1/2$ for $\lambda \geq 0$, $\nabla\Phi(\bx^*) = \mathbf{0}$ gives $\bg(\bx^*) = -\nabla P(\bx^*)$ with $\|\bg(\bx^*)\| = O(\beta^2 n \varepsilon)$, vanishing as $\varepsilon \to 0^+$}.
\end{proof}

Theorem~\ref{thm:convergence} guarantees convergence \rev{to the limit point $\bx^*$}, but leaves open the behavior at the instant of arrival, where the feedback law~\eqref{eq:crgd} has a removable singularity. The following proposition shows that setting $\bu = \mathbf{0}$ at the equilibrium yields a continuous trajectory.
\begin{proposition}[Equilibrium Extension] \label{prop:wellposed}
Define the extended control $\bu = \mathbf{0}$ on $\{ \Phi = \Phi^* \}$. Then the resulting trajectory is continuous at any non-degenerate equilibrium (where $\nabla^2 \Phi(\bx^*) \succ 0$) under any convergence law $\sigma$.
\end{proposition}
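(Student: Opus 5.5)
The plan is to show that the control magnitude $\|\bu\| = \sigma(V,t)/\|\nabla\Phi\|$, with $V = \Phi - \Phi^*$, tends to zero as $\bx \to \bx^*$. Continuity of the extended feedback at $\bx^*$ then follows, and hence continuity (indeed absolute continuity) of the trajectory through the arrival instant. Throughout we assume $\Phi(\bx^*) = \Phi^*$ and work in a neighborhood of $\bx^*$ where $\nabla^2\Phi(\bx^*) \succ 0$.

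\emph{Step 1: local quadratic comparisons.} By Prop.~\ref{prop:regularity}, $\Phi \in C^2$. Non-degeneracy therefore gives constants $0 < m \le M$ and a ball $B$ around $\bx^*$ on which
\begin{equation*}
\tfrac{m}{2}\|\bx-\bx^*\|^2 \le V(\bx) \le \tfrac{M}{2}\|\bx-\bx^*\|^2, \qquad m\|\bx-\bx^*\| \le \|\nabla\Phi(\bx)\| \le M\|\bx-\bx^*\|.
\end{equation*}
This is Taylor's theorem with $\nabla\Phi(\bx^*) = \mathbf{0}$, shrinking $B$ so that the Hessian eigenvalues stay in $[m,M]$. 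Combining the two estimates yields a Łojasiewicz-type bound with exponent $1/2$: $\|\nabla\Phi\| \ge c\sqrt{V}$ on $B$, with $c = m\sqrt{2/M}$. In particular, $\nabla\Phi \neq \mathbf{0}$ on $B\setminus\{\bx^*\}$, so the only singularity of~\eqref{eq:crgd} in $B$ is at $\bx^*$ itself.

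\emph{Step 2: bounding the control law by law.} We have $\|\bu\| \le \sigma(V,t)/(c\sqrt{V})$, and we check each case.
\begin{itemize}
\item Exponential: $\|\bu\| \le (C/c)\sqrt{V} \to 0$.
\item Finite-time: $\|\bu\| \le (C/c)\, V^{\alpha-1/2}$. This vanishes only when $\alpha > 1/2$. When $\alpha \le 1/2$ the control may fail to vanish, and may even blow up. Even so, $\|\bu\|$ remains integrable in time, since $\int \|\bu\|\,dt \le \int \sigma/(c\sqrt{V})\,dt = \int dV/(c\sqrt{V}) = (2/c)\sqrt{V(0)} < \infty$.
\item Fixed-time: the same reasoning applies term by term.
\item Prescribed-time: $\sigma = \mu_T V/(T-t)$ and $V(t) = V(0)\big((T-t)/T\big)^{\mu_T}$, so $\|\bu\| \lesssim (T-t)^{\mu_T/2 - 1}$. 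This is again integrable because $\mu_T > 0$.
\end{itemize}

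\emph{Step 3: the uniform argument.} Given Step 2, the cleanest law-independent route is the arc-length bound
\begin{equation*}
\int_{t_0}^{T_s}\|\dot{\bx}\|\,dt = \int_{t_0}^{T_s}\frac{\sigma}{\|\nabla\Phi\|}\,dt = \int_{0}^{V(t_0)}\frac{dV}{\|\nabla\Phi\|} \le \frac{2}{c}\sqrt{V(t_0)},
\end{equation*}
which holds for any $\sigma$ once the trajectory is in $B$. Hence $\bx(t)$ is Cauchy as $t \to T_s$, and its limit lies in $\{\Phi = \Phi^*\}\cap B = \{\bx^*\}$. Defining $\bu = \mathbf{0}$ thereafter makes $\bx^*$ an equilibrium: $V = 0$ is invariant because $\sigma(0,t) = 0$. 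Concatenating the two pieces gives a continuous trajectory.

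\emph{Main obstacle.} The main obstacle is showing that the trajectory actually enters and stays in $B$. We intend to handle this using the sublevel-set structure: $\Phi$ is decreasing, and for small $V$ the connected component of $\{V \le \delta\}$ containing $\bx^*$ lies inside $B$ by the quadratic lower bound. We also note that the statement should be read as asserting continuity of the trajectory, not of $\bu$, since $\bu$ can be unbounded for finite-time laws with $\alpha \le 1/2$.
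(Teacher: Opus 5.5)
Your Step~3 is the paper's proof. Both arguments change variables via $\sigma\,\mathrm{d}t = -\mathrm{d}\Phi$ and use the exponent-$1/2$ bound $\|\nabla\Phi\| \ge c\sqrt{\Phi-\Phi^*}$ near the non-degenerate minimum to bound the remaining arc length by $\frac{2}{c}\sqrt{\Phi-\Phi^*}$, which gives existence of $\lim_{t\to T_s}\bx(t)$ and hence continuity; your Taylor derivation of that bound and your sublevel-set trapping argument supply details the paper leaves implicit. Your caveat is also correct and sharper than the paper: its closing claim that $\|\bu\| = \sigma/\|\nabla\Phi\| \to 0$ fails for $\alpha \le 1/2$ or $\mu_T \le 2$ (including the $\alpha = 0.5$ and $\mu_T = 2$ used in its experiments), but that claim is not needed, since finite arc length alone yields continuity of the trajectory.
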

\begin{proof}
The arc length $\int_0^{T_s} \| \bu \| \, \mathrm{d}t = \int_{\Phi^*}^{\Phi(0)} \mathrm{d}\Phi / \| \nabla \Phi \|$ (obtained via $\sigma \, \mathrm{d}t = -\mathrm{d}\Phi$) is finite near a non-degenerate minimum, since $\| \nabla \Phi \| \geq c_\ell (\Phi - \Phi^*)^{1/2}$ gives a bound of $\frac{2}{c_\ell}(\Phi(0) - \Phi^*)^{1/2}$. Finite arc length ensures $\bx^* := \lim_{t \to T_s} \bx(t)$ exists, and $\| \bu \| = \sigma / \| \nabla \Phi \| \to 0$ ensures continuity.
\end{proof}

\section{Numerical Validation}
\label{sec:results}

We illustrate the framework on two problem classes using MATLAB's \texttt{ode15s} solver with relative tolerance $10^{-10}$. In all experiments, the Lyapunov function is $V = \Phi - \tilde{\Phi}^*$ with $\tilde{\Phi}^* = 0$ as described in Sec.~\ref{sec:dynamics}\rev{, the smoothing parameter is $\varepsilon = 10^{-6}$,} and the numerical implementation uses the regularized form $\bu = -\sigma \nabla \Phi / (\| \nabla \Phi \|^2 + \epsilon_r)$ with $\epsilon_r = 10^{-12}$\rev{, introducing a negligible $O(\epsilon_r / \| \nabla \Phi \|^2)$ deviation from the exact decay}. All simulations compare CRGD against standard gradient descent $\dot{\bx} = -\nabla J$. Unless otherwise noted, the convergence law parameters are: $c = 2$ (exponential); $c = 2$, $\alpha = 0.5$ (finite-time); $c_1 = c_2 = 1$, $\alpha = 0.5$, $p = 1.5$ (fixed-time); and $T = 0.1$\,s, $\mu_T = 2$ (prescribed-time).

\subsection{2D Example: Three-Fold Potential}

Consider the three-fold symmetric landscape
\begin{equation} \label{eq:threefold}
    J(\bx) = \tfrac{1}{2}\|\bx\|^2 + \tfrac{1}{4}\|\bx\|^4 - \eta(x_1^3 - 3 x_1 x_2^2),
\end{equation}
with $\eta = 0.7$. This potential has seven critical points: the global minimum at the origin, three saddle points at radius $r_s \approx 0.73$ ($\lambda_{\min} \approx -0.47$), and three outer local minima at radius $r_m \approx 1.37$ ($\lambda_{\min} \approx 0.88$).

We initialize at $\bx_0 = [r_s, \, 10^{-4}]^\top$, on the stable manifold of a saddle, and set $\beta = 1$. Figure~\ref{fig:trajectories} shows that gradient descent converges to the saddle ($J = 0.065$), trapped along the stable manifold, while CRGD escapes to the outer local minimum ($J = 0.019$). A grid evaluation of $\|\nabla \Phi\|$ on a $500 \times 500$ mesh confirms no spurious critical points of $\Phi$ at $\beta = 1$. 

\begin{figure}[t]
    \centering
    \includegraphics[width=0.9\linewidth]{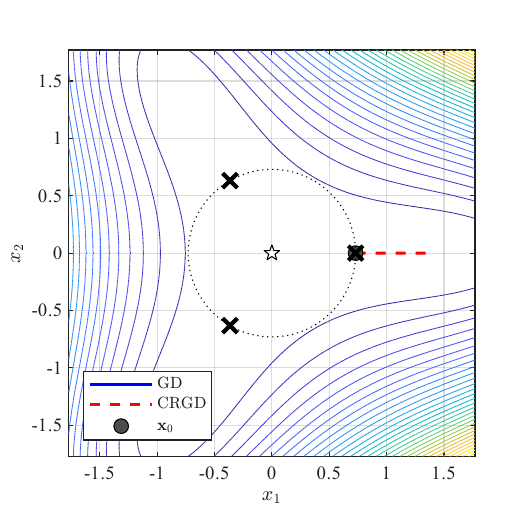}
    \caption{Three-fold potential: GD converges to the saddle (\textsf{x} marker), while CRGD escapes to the outer local minimum.}
    \label{fig:trajectories}
\end{figure}

The GD curve in Fig.~\ref{fig:rates} stalls near $\Phi \approx 0.065$. The four CRGD curves each produce a distinct decay profile closely tracking the theoretical ODE solution (dashed) until the plateau at $\Phi_{\mathrm{eq}} \approx 0.019$, the cost at the outer local minimum where the curvature penalty vanishes.

\begin{figure}[t]
    \centering
    \includegraphics[width=0.9\linewidth]{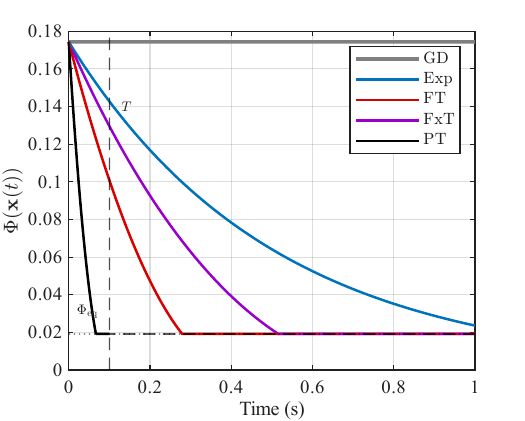}
    \caption{Augmented cost $\Phi(\bx(t))$ on the three-fold potential. GD (gray) stalls near the saddle; CRGD curves track theoretical solutions (dashed) until $\Phi_{\mathrm{eq}} \approx 0.019$, with the prescribed-time law converging at $t = T = 0.1$\,s.}
    \label{fig:rates}
\end{figure}

\subsection{High-Dimensional Example: Matrix Factorization}

We next consider the rank-one symmetric matrix factorization problem
\begin{equation} \label{eq:smf}
    J(\bx) = \tfrac{1}{4}\|\bx\bx^\top - \mathbf{M}^*\|_F^2,
\end{equation}
where $\mathbf{M}^* \in \mathbb{S}^n$ has eigenvalues $\lambda_1 > \lambda_2 > \cdots > \lambda_n \geq 0$. This nonconvex objective, whose strict saddle landscape was characterized by Li et al.~\cite{li_symmetry_2019}, arises in low-rank matrix recovery and related applications. The gradient is $\nabla J = (\|\bx\|^2 \mathbf{I} - \mathbf{M}^*)\bx$ and the Hessian is $\nabla^2 J = \|\bx\|^2 \mathbf{I} + 2\bx\bx^\top - \mathbf{M}^*$. The global minimizers are $\bx^* = \pm\sqrt{\lambda_1}\,\mathbf{v}_1$, and each $\bx_s = \pm\sqrt{\lambda_i}\,\mathbf{v}_i$ for $i \geq 2$ is a saddle point with $\lambda_{\min}(\nabla^2 J(\bx_s)) = \lambda_i - \lambda_1$. Setting $\lambda_1 = 1$ and $\lambda_2 = 1 - \delta$ produces a tunable eigenvalue gap~$\delta$ at the dominant saddle.

The curvature sensitivity vector admits the closed-form expression $\bw_{\min} = 2\bx + 4(\bx^\top\bu_{\min})\bu_{\min}$, which is $O(n)$ to evaluate and independent of $\mathbf{M}^*$. All experiments use $n = 50$, $\beta = 1$ with the exponential law $\sigma = cV$, $c = 2$. Since $J \geq 0$, the lower bound $\tilde{\Phi}^* = 0$ is valid for this example.

\subsubsection{\rev{Convergence Profiles Under Four Laws}}
\rev{
Figure~\ref{fig:profiles} displays the Lyapunov function $V(t) = \Phi(\bx(t)) - \Phi_{\mathrm{eq}}$ on a semilogarithmic scale for all four convergence laws, initialized from a random unit-sphere direction with $\delta = 0.01$. Each trajectory tracks its theoretical decay envelope (dashed) until reaching the equilibrium value $\Phi_{\mathrm{eq}}$ at the global minimizer, where the curvature penalty vanishes.}

\begin{figure}[t]
    \centering
    \includegraphics[width=0.9\linewidth]{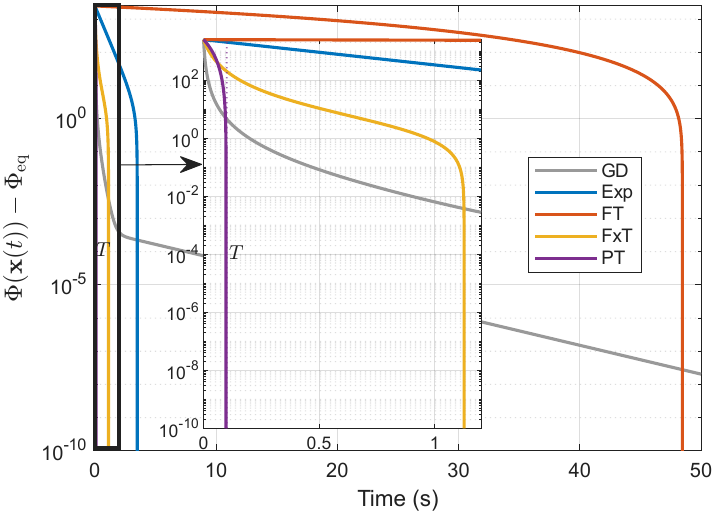}
    \caption{\rev{OLF $V(t) = \Phi(\bx(t)) - \Phi_{\mathrm{eq}}$ on matrix factorization ($n = 50$, $\delta = 0.01$). All four CRGD laws converge to tolerance $10^{-10}$.}}
    \label{fig:profiles}
\end{figure}

\subsubsection{\rev{Monte Carlo Study}}
\rev{
Table~\ref{tab:mc} reports SOSP convergence rates over 2000 random unit-sphere initializations, with horizon $T = 10$\,s for CRGD and $T = 1000$\,s for gradient descent. CRGD achieves 100\% across all configurations; gradient descent degrades to 53\% at small gaps despite the much longer horizon, as trajectories near narrow-gap saddle points remain trapped. A sensitivity study ($n = 50$, $\delta = 0.01$) confirms robustness for all $\beta \leq 125$, bracketing $\beta^* \in (125, 126)$ consistent with Prop.~\ref{prop:spurious}. Numerical inspection of all trajectories confirms $\|\nabla\Phi\| > 0$ throughout, satisfying Assumption~\ref{ass:nondegen} on this benchmark.} 

\begin{table}[t]
    \centering
    \caption{\rev{Monte Carlo SOSP convergence rates (\%) over 2000 trials per gap $\delta$; horizons $T = 1000$\,s (GD), $T = 10$\,s (CRGD); CRGD with exponential law, $\beta = 1$.}}
    \label{tab:mc}
    \renewcommand{\arraystretch}{1.15}
    \begin{tabular}{@{}lcccc@{}}
        \toprule
        $\delta$ & \rev{0.100} & \rev{0.005} & \rev{0.002} & \rev{0.001} \\
        \midrule
        GD   & \rev{100} & \rev{88}  & \rev{53}  & \rev{53}  \\
        CRGD & \rev{100} & \rev{100} & \rev{100} & \rev{100} \\
        \bottomrule
    \end{tabular}
\end{table}

\section{Conclusion}
\label{sec:conclusion}

We have presented Curvature-Regularized Gradient Dynamics, a deterministic method that extends the control-centric OLF framework to enforce second-order optimality via a smooth curvature penalty on the \rev{negative Hessian eigenvalues}. The method eliminates saddle points as equilibria while preserving all local minima, and the augmented cost serves as a Lyapunov function with exact decay $\dot{\Phi} = -\sigma(\Phi, t)$, yielding user-selectable convergence rates to SOSPs. \rev{Numerical experiments confirm 100\% convergence to SOSPs under all four convergence laws, contributing to the systems-theoretic perspective on optimization~\cite{dorfler_toward_2024} by extending the continuous-time Lyapunov framework from first-order to second-order stationarity.} The control-centric perspective developed here, which constructs Lyapunov certificates that encode second-order optimality by design, may inform the development of optimization algorithms for broader classes of nonconvex problems arising in control and learning.

\bibliographystyle{IEEEtran}
\bibliography{references}

\end{document}